\let\origsection=\section \def\section{\@ifstar{\origsection*}{\mysection}}
\def\mysection{\@startsection{section}{1}\z@{.7\linespacing\@plus\linespacing}{.5\linespacing}{\normalfont\scshape\centering\S}}
\renewcommand{\PrintDOI}[1]{\doi{#1}}
\numberwithin{equation}{section}
\numberwithin{figure}{section}
\let\polishlcross=\l
\def\l{\ifmmode\ell\else\polishlcross\fi}
\let\emptyset=\varnothing
\let\setminus=\smallsetminus
\def\moverlay{\mathpalette\mov@rlay}
\def\mov@rlay#1#2{\leavevmode\vtop{   \baselineskip\z@skip \lineskiplimit-\maxdimen
		\ialign{\hfil$\m@th#1##$\hfil\cr#2\crcr}}}
\newcommand{\charfusion}[3][\mathord]{
	#1{\ifx#1\mathop\vphantom{#2}\fi
		\mathpalette\mov@rlay{#2\cr#3}
	}
	\ifx#1\mathop\expandafter\displaylimits\fi}
\newcommand{\dcup}{\charfusion[\mathbin]{\cup}{\cdot}}
\DeclareFontFamily{U}  {MnSymbolC}{}
\DeclareSymbolFont{MnSyC}         {U}  {MnSymbolC}{m}{n}
\DeclareFontShape{U}{MnSymbolC}{m}{n}{
	<-6>  MnSymbolC5
	<6-7>  MnSymbolC6
	<7-8>  MnSymbolC7
	<8-9>  MnSymbolC8
	<9-10> MnSymbolC9
	<10-12> MnSymbolC10
	<12->   MnSymbolC12}{}
\DeclareMathSymbol{\powerset}{\mathord}{MnSyC}{180}
\newcommand{\pedge}[9]{
	
	\ifx\relax#6\relax
	\def\qoffs{0pt}
	\else
	\def\qoffs{#6}
	\fi
	
	\def\phedge{
		($#1+#5!\qoffs!-90:#2-#5$) -- 
		($#2+#1!\qoffs!-90:#3-#1$) -- 
		($#3+#2!\qoffs!-90:#4-#2$) -- 
		($#4+#3!\qoffs!-90:#5-#3$) -- 
		($#5+#4!\qoffs!-90:#1-#4$) -- cycle}

	\coordinate (12) at ($#1!\qoffs!90:#2$);
	\coordinate (15) at ($#1!\qoffs!-90:#5$);
	\coordinate (23) at ($#2!\qoffs!90:#3$);
	\coordinate (21) at ($#2!\qoffs!-90:#1$);
	\coordinate (34) at ($#3!\qoffs!90:#4$);
	\coordinate (32) at ($#3!\qoffs!-90:#2$);
	\coordinate (45) at ($#4!\qoffs!90:#5$);
	\coordinate (43) at ($#4!\qoffs!-90:#3$);
	\coordinate (51) at ($#5!\qoffs!90:#1$);
	\coordinate (54) at ($#5!\qoffs!-90:#4$);

	\def\nphedge{
		(15) let \p1=($(15)-#1$), \p2=($(12)-#1$) in 
		arc[start angle={atan2(\y1,\x1)}, delta angle={atan2(\y2,\x2)-atan2(\y1,\x1)-360*(atan2(\y2,\x2)-atan2(\y1,\x1)>0)}, x radius=\qoffs, y radius=\qoffs] --
		(21) let \p1=($(21)-#2$), \p2=($(23)-#2$) in 
		arc[start angle={atan2(\y1,\x1)}, delta angle={atan2(\y2,\x2)-atan2(\y1,\x1)-360*(atan2(\y2,\x2)-atan2(\y1,\x1)>0)}, x radius=\qoffs, y radius=\qoffs] --
		(32) let \p1=($(32)-#3$), \p2=($(34)-#3$) in 
		arc[start angle={atan2(\y1,\x1)}, delta angle={atan2(\y2,\x2)-atan2(\y1,\x1)-360*(atan2(\y2,\x2)-atan2(\y1,\x1)>0)}, x radius=\qoffs, y radius=\qoffs] --
		(43) let \p1=($(43)-#4$), \p2=($(45)-#4$) in 
		arc[start angle={atan2(\y1,\x1)}, delta angle={atan2(\y2,\x2)-atan2(\y1,\x1)-360*(atan2(\y2,\x2)-atan2(\y1,\x1)>0)}, x radius=\qoffs, y radius=\qoffs] --
		(54) let \p1=($(54)-#5$), \p2=($(51)-#5$) in 
		arc[start angle={atan2(\y1,\x1)}, delta angle={atan2(\y2,\x2)-atan2(\y1,\x1)-360*(atan2(\y2,\x2)-atan2(\y1,\x1)>0)}, x radius=\qoffs, y radius=\qoffs] --
		cycle}

	\ifx\relax#7\relax
	\def\plwidth{1pt}
	\else
	\def\plwidth{#7}
	\fi
	
	\ifx\relax#9\relax
	\fill \nphedge;
	\else
	\fill[#9]\nphedge;
	\fi
	
	\ifx\relax#8\relax
	\draw[line width=\plwidth,rounded corners=\qoffs]\nphedge;
	\else
	\draw[line width=\plwidth,#8]\nphedge;
	\fi
}
\newcommand{\qedge}[7]{
	
	\ifx\relax#4\relax
	\def\qoffs{0pt}
	\else
	\def\qoffs{#4}
	\fi
	
	\def\qhedge{
		($#1+#3!\qoffs!-90:#2-#3$) --
		($#2+#1!\qoffs!-90:#3-#1$) --
		($#3+#2!\qoffs!-90:#1-#2$) -- cycle}

	\coordinate (12) at ($#1!\qoffs!90:#2$);
	\coordinate (13) at ($#1!\qoffs!-90:#3$);
	\coordinate (23) at ($#2!\qoffs!90:#3$);
	\coordinate (21) at ($#2!\qoffs!-90:#1$);
	\coordinate (31) at ($#3!\qoffs!90:#1$);
	\coordinate (32) at ($#3!\qoffs!-90:#2$);
	
	\def\nqhedge{
		(13) let \p1=($(13)-#1$), \p2=($(12)-#1$) in
		arc[start angle={atan2(\y1,\x1)}, delta angle={atan2(\y2,\x2)-atan2(\y1,\x1)-360*(atan2(\y2,\x2)-atan2(\y1,\x1)>0)}, x radius=\qoffs, y radius=\qoffs] --
		(21) let \p1=($(21)-#2$), \p2=($(23)-#2$) in
		arc[start angle={atan2(\y1,\x1)}, delta angle={atan2(\y2,\x2)-atan2(\y1,\x1)-360*(atan2(\y2,\x2)-atan2(\y1,\x1)>0)}, x radius=\qoffs, y radius=\qoffs] --
		(32) let \p1=($(32)-#3$), \p2=($(31)-#3$) in
		arc[start angle={atan2(\y1,\x1)}, delta angle={atan2(\y2,\x2)-atan2(\y1,\x1)-360*(atan2(\y2,\x2)-atan2(\y1,\x1)>0)}, x radius=\qoffs, y radius=\qoffs] --
		cycle}
	
	\ifx\relax#5\relax
	\def\qlwidth{1pt}
	\else
	\def\qlwidth{#5}
	\fi
	
	\ifx\relax#7\relax
	\fill \nqhedge;
	\else
	\fill[#7]\nqhedge;
	\fi
	
	\ifx\relax#6\relax
	\draw[line width=\qlwidth,rounded corners=\qoffs]\nqhedge;
	\else
	\draw[line width=\qlwidth,#6]\nqhedge;
	\fi
}
\newcommand{\redge}[8]{
	
	\ifx\relax#5\relax
	\def\qoffs{0pt}
	\else
	\def\qoffs{#5}
	\fi
	
	\def\rhedge{
		($#1+#4!\qoffs!-90:#2-#4$) -- 
		($#2+#1!\qoffs!-90:#3-#1$) -- 
		($#3+#2!\qoffs!-90:#4-#2$) -- 
		($#4+#3!\qoffs!-90:#1-#3$) -- cycle}

	\coordinate (12) at ($#1!\qoffs!90:#2$);
	\coordinate (14) at ($#1!\qoffs!-90:#4$);
	\coordinate (23) at ($#2!\qoffs!90:#3$);
	\coordinate (21) at ($#2!\qoffs!-90:#1$);
	\coordinate (34) at ($#3!\qoffs!90:#4$);
	\coordinate (32) at ($#3!\qoffs!-90:#2$);
	\coordinate (41) at ($#4!\qoffs!90:#1$);
	\coordinate (43) at ($#4!\qoffs!-90:#3$);
	
	\def\nrhedge{
		(14) let \p1=($(14)-#1$), \p2=($(12)-#1$) in 
		arc[start angle={atan2(\y1,\x1)}, delta angle={atan2(\y2,\x2)-atan2(\y1,\x1)-360*(atan2(\y2,\x2)-atan2(\y1,\x1)>0)}, x radius=\qoffs, y radius=\qoffs] --
		(21) let \p1=($(21)-#2$), \p2=($(23)-#2$) in 
		arc[start angle={atan2(\y1,\x1)}, delta angle={atan2(\y2,\x2)-atan2(\y1,\x1)-360*(atan2(\y2,\x2)-atan2(\y1,\x1)>0)}, x radius=\qoffs, y radius=\qoffs] --
		(32) let \p1=($(32)-#3$), \p2=($(34)-#3$) in 
		arc[start angle={atan2(\y1,\x1)}, delta angle={atan2(\y2,\x2)-atan2(\y1,\x1)-360*(atan2(\y2,\x2)-atan2(\y1,\x1)>0)}, x radius=\qoffs, y radius=\qoffs] --
		(43) let \p1=($(43)-#4$), \p2=($(41)-#4$) in 
		arc[start angle={atan2(\y1,\x1)}, delta angle={atan2(\y2,\x2)-atan2(\y1,\x1)-360*(atan2(\y2,\x2)-atan2(\y1,\x1)>0)}, x radius=\qoffs, y radius=\qoffs] --
		cycle}
	
	\ifx\relax#6\relax
	\def\rlwidth{1pt}
	\else
	\def\rlwidth{#6}
	\fi
	
	\ifx\relax#8\relax
	\fill \nrhedge;
	\else
	\fill[#8]\nrhedge;
	\fi
	
	\ifx\relax#7\relax
	\draw[line width=\rlwidth,rounded corners=\qoffs]\nrhedge;
	\else
	\draw[line width=\rlwidth,#7]\nrhedge;
	\fi
}
\let\epsilon=\varepsilon
\let\rho=\varrho
\let\theta=\vartheta
\newcommand{\cF}{\mathcal{F}}
\newcommand{\cG}{\mathcal{G}}
\newcommand{\cH}{\mathcal{H}}
\newcommand{\cP}{\mathcal{P}}
\newcommand{\ccP}{\mathscr{P}}
\newtheoremstyle{note}  {4pt}  {4pt}  {\sl}  {}  {\bfseries}  {.}  {.5em}          {}
\newtheoremstyle{introthms}  {3pt}  {3pt}  {\itshape}  {}  {\bfseries}  {.}  {.5em}          {\thmnote{#3}}
\newtheoremstyle{remark}  {2pt}  {2pt}  {\rm}  {}  {\bfseries}  {.}  {.3em}          {}
\theoremstyle{plain}
\newtheorem{theorem}{Theorem}[section]
\newtheorem{lemma}[theorem]{Lemma}
\newtheorem{conj}[theorem]{Conjecture}
\newtheorem{claim}[theorem]{Claim}
\theoremstyle{note}
\newtheorem{dfn}[theorem]{Definition}
\theoremstyle{remark}
\newcommand*\patchAmsMathEnvironmentForLineno[1]{
	\expandafter\let\csname old#1\expandafter\endcsname\csname #1\endcsname
	\expandafter\let\csname oldend#1\expandafter\endcsname\csname end#1\endcsname
	\renewenvironment{#1}
	{\linenomath\csname old#1\endcsname}
	{\csname oldend#1\endcsname\endlinenomath}}
\newcommand*\patchBothAmsMathEnvironmentsForLineno[1]{
	\patchAmsMathEnvironmentForLineno{#1}
	\patchAmsMathEnvironmentForLineno{#1*}}
\newcommand{\overrighharpoonup}[1]{\ThisStyle{%
		\vbox {\m@th\ialign{##\crcr
				\rightharpoonupfill \crcr
				\noalign{\kern-\p@\nointerlineskip}
				$\hfil\SavedStyle#1\hfil$\crcr}}}}
\def\rightharpoonupfill{%
	$\SavedStyle\m@th\mkern+0.8mu\cleaders\hbox{$\shortbar\mkern-4mu$}\hfill\rightharpoonuptip\mkern+0.8mu$}
\def\rightharpoonuptip{%
	\raisebox{\z@}[2pt][1pt]{\scalebox{0.55}{$\SavedStyle\rightharpoonup$}}}
\def\shortbar{%
	\smash{\scalebox{0.55}{$\SavedStyle\relbar$}}}
\newcommand{\overlefharpoonup}[1]{\ThisStyle{%
		\vbox {\m@th\ialign{##\crcr
				\leftharpoonupfill \crcr
				\noalign{\kern-\p@\nointerlineskip}
				$\hfil\SavedStyle#1\hfil$\crcr}}}}
\def\leftharpoonupfill{%
	$\SavedStyle\m@th\mkern+0.8mu\cleaders\hbox{$\shortbar\mkern-4mu$}\hfill\leftharpoonuptip\mkern+0.8mu$}
\def\leftharpoonuptip{%
	\raisebox{\z@}[2pt][1pt]{\scalebox{0.55}{$\SavedStyle\leftharpoonup$}}}
\newsavebox\myboxA
\newsavebox\myboxB
\newlength\mylenA
\newcommand*\xoverline[2][0.75]{%
	\sbox{\myboxA}{$\m@th#2$}%
	\setbox\myboxB\null
	\ht\myboxB=\ht\myboxA%
	\dp\myboxB=\dp\myboxA%
	\wd\myboxB=#1\wd\myboxA
	\sbox\myboxB{$\m@th\overline{\copy\myboxB}$}
	\setlength\mylenA{\the\wd\myboxA}
	\addtolength\mylenA{-\the\wd\myboxB}%
	\ifdim\wd\myboxB<\wd\myboxA%
	\rlap{\hskip 0.5\mylenA\usebox\myboxB}{\usebox\myboxA}%
	\else
	\hskip -0.5\mylenA\rlap{\usebox\myboxA}{\hskip 0.5\mylenA\usebox\myboxB}%
	\fi}
\begin{document}
	
	\title[A local version of Katona's intersection theorem]
	{A local version of Katona's intersection theorem}
	
	\author[M. Sales]{Marcelo Sales}
	\address{Department of Mathematics, Emory University, Atlanta, USA}
	\email{marcelo.tadeu.sales@emory.edu}
	
	\author[B. Sch\"ulke]{Bjarne Sch\"ulke}
	\address{Department of Mathematics, California Institute of Technology, Pasadena, USA}
	\email{schuelke@caltech.edu}
	
	\subjclass[2010]{}
	\keywords{Extremal set theory, intersecting families, shadow}
	
	\begin{abstract}
		Katona's intersection theorem states that every intersecting family~$\cF\subseteq[n]^{(k)}$ satisfies~$\vert\partial\cF\vert\geq\vert\cF\vert$, where~$\partial\cF=\{F\setminus x:x\in F\in\cF\}$ is the shadow of~$\cF$.
		Frankl conjectured that for~$n>2k$ and every intersecting family~$\cF\subseteq [n]^{(k)}$, there is some~$i\in[n]$ such that~$\vert \partial \cF(i)\vert\geq \vert\cF(i)\vert$, where~$\cF(i)=\{F\setminus i:i\in F\in\cF\}$ is the link of~$\cF$ at~$i$.
		Here, we prove this conjecture in a very strong form for~$n> \binom{k+1}{2}$.
		In particular, our result implies that for any~$j\in[k]$, there is a $j$-set~$\{a_1,\dots,a_j\}\in[n]^{(j)}$ such that~$\vert \partial \cF(a_1,\dots,a_j)\vert\geq \vert\cF(a_1,\dots,a_j)\vert$. A similar statement is also obtained for cross-intersecting families.
		
	\end{abstract}
	
	\maketitle
	
	\section{Introduction}\label{sec:intro}
		Throughout the paper, let~$n,k,\ell$ be positive integers. Let~$[n]=\{1,\ldots,n\}$ and for a set~$X$ let $X^{(k)}=\{A\subseteq X:\: |A|=k\}$ be the set of $k$-subsets of $X$. 
		A family~$\cF\subseteq[n]^{(k)}$ is called \emph{intersecting} if~$F\cap F'\neq\emptyset$ for all~$F,F'\in\cF$ and the \emph{shadow} of~$\cF$ is $$\partial\cF=\{F\setminus x:x\in F\in\cF\}\,.$$
		Extremal properties of shadows and intersecting families are amongst the most prominent topics in extremal set theory.
		For instance, two cornerstones of the area are the Erd\H{o}s--Ko--Rado theorem~\cite{EKR:61}, which determines the maximum size of an intersecting family and the Kruskal--Katona theorem~\cites{Kr:63, Ka:68}, which provides a solution for the minimisation problem of the shadow.
		
		The following celebrated theorem due to Katona~\cite{K:64} combines these two concepts by bounding the size of the shadow of an intersecting family.
	
		\begin{theorem}[\cite{K:64}]\label{thm:Katona}
			Suppose~$\cF\subseteq[n]^{(k)}$ is intersecting.
			Then~$\vert\partial\cF\vert\geq\vert\cF\vert$.
		\end{theorem}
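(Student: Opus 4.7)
I reduce to a shifted intersecting family and then analyze the shadow by conditioning on whether the element $1$ lies in each set. First, apply the compressions $S_{ij}$, $i<j$, iteratively until $\cF$ is shifted. A case analysis shows shifting preserves the intersecting property (a putative disjoint pair in $S_{ij}(\cF)$ forces a disjoint pair already in $\cF$), keeps $|\cF|$ unchanged, and cannot increase $|\partial \cF|$; hence we may assume $\cF$ is shifted.

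\textbf{Split and identify the shadow.} Write $\cF = \cF^1 \sqcup \cF^0$ where $\cF^1 = \{F \in \cF : 1 \in F\}$ and $\cF^0 = \{F \in \cF : 1 \notin F\}$, and set $\cH = \{F \setminus 1 : F \in \cF^1\}$. If $\cF^0 = \emptyset$, the map $F \mapsto F \setminus 1$ injects $\cF$ into $\partial \cF$. Otherwise, shiftedness applied to any $F \in \cF^0$ and $x \in F$ gives $(F \setminus x) \cup \{1\} \in \cF^1$, so $F \setminus x \in \cH$; that is, $\partial \cF^0 \subseteq \cH$. Combining this with the disjoint decomposition $\partial \cF^1 = \cH \sqcup \{\{1\} \cup G : G \in \partial \cH\}$ (partitioning shadow sets of $\cF^1$ by whether they contain $1$), we obtain
\begin{equation*}
|\partial \cF| \;=\; |\cH| + |\partial \cH| \;=\; |\cF^1| + |\partial \cH|,
\end{equation*}
so the theorem reduces to showing $|\partial \cH| \geq |\cF^0|$.

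\textbf{Induct.} Induct on $n$ (allowing any value of $k$), with trivial base case $n = k$. Applying the inductive hypothesis to the shifted intersecting family $\cF^0 \subseteq ([n] \setminus \{1\})^{(k)}$ yields $|\partial \cF^0| \geq |\cF^0|$, hence $|\cH| \geq |\partial \cF^0| \geq |\cF^0|$. When $\cH$ is itself intersecting, the inductive hypothesis applied to $\cH \subseteq ([n] \setminus \{1\})^{(k-1)}$ gives $|\partial \cH| \geq |\cH| \geq |\cF^0|$, closing the argument.

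\textbf{Main obstacle.} The hard case is when $\cH$ fails to be intersecting. Here $\cH$ and $\cF^0$ are cross-intersecting in $[n] \setminus \{1\}$ (any $F \in \cF^0$ meets any $H \in \cH$, because $F, \{1\} \cup H \in \cF$ and $1 \notin F$), and by shiftedness the minimal disjoint pair $\{2, \ldots, k\}, \{k+1, \ldots, 2k-1\}$ lies in $\cH$, forcing every $F \in \cF^0$ to meet both intervals. Establishing $|\partial \cH| \geq |\cF^0|$ in this case---via a further recursion on $\cH$ at its smallest element, or a direct count exploiting these constraints together with the shifted structure---is the main technical step.
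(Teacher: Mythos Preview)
The paper does not prove Theorem~\ref{thm:Katona}; it is quoted from~\cite{K:64} and used only as a black box (alongside Theorem~\ref{thm:Franklcrossint}) in the proofs of the new local results. There is therefore no argument in the paper to compare yours against, so let me assess the attempt on its own.

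Your reduction is correct: after shifting, the containment $\partial\cF^0\subseteq\cH$ and the identity $\vert\partial\cF\vert=\vert\cF^1\vert+\vert\partial\cH\vert$ both hold, and the theorem becomes equivalent to $\vert\partial\cH\vert\geq\vert\cF^0\vert$. The gap is exactly the case you yourself flag as the ``main obstacle'', and it is a genuine one. When $\cH$ is not intersecting, nothing you have assembled yields the needed inequality: the inductive hypothesis applied to $\cF^0$ gives only $\vert\cH\vert\geq\vert\partial\cF^0\vert\geq\vert\cF^0\vert$; Kruskal--Katona does not force $\vert\partial\cH\vert\geq\vert\cH\vert$ once $\vert\cH\vert>\binom{2k-3}{k-1}$; and Theorem~\ref{thm:Franklcrossint} applied to the cross-intersecting pair $(\cH,\cF^0)$ returns only the disjunct you already know. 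The ``further recursion on $\cH$ at its smallest element'' that you gesture toward meets the same wall one level down: splitting $\cH$ at $2$ yields $\vert\partial\cH\vert=\vert\cH^2\vert+\vert\partial\cK\vert$ for the $(k-2)$-uniform link $\cK=\{H\setminus 2:2\in H\in\cH\}$ (shiftedness again gives $\partial\cH^{\bar 2}\subseteq\cK$), so the target becomes $\vert\partial\cK\vert\geq\vert\cF^0\vert-\vert\cH^2\vert$, an inequality of a different shape that neither matches your inductive hypothesis nor follows from the cross-intersection constraints you record. As written, the induction does not close; what you have is a plan with its central step missing rather than a proof.

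For orientation: Katona's original argument in~\cite{K:64} sidesteps this decomposition entirely via an averaging over cyclic permutations. A second standard route is to prove the cross-intersecting strengthening (Theorem~\ref{thm:Franklcrossint}) directly and then specialise to $\cG=\cF$; it is in that more symmetric setting that a shifting induction can be made self-closing. Either approach avoids the non-intersecting~$\cH$ case altogether.
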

	
		Theorem \ref{thm:Katona} was proved in a more general setting in \cite{K:64}. Recently, the result was improved by Frankl and Katona~\cite{FK:21} and Liu and Mubayi~\cite{LM:21} for intersecting families of larger size.
		It is also worth to note the cross-intersecting variant of Theorem \ref{thm:Katona} that Frankl proved in~\cite{F:76}. Given integers~$k,\ell\geq 1$, a pair of families~$\cF\subseteq [n]^{(k)}$, $\cG\subseteq [n]^{(\ell)}$ is \textit{cross-intersecting} if for every~$F\in \cF$ and~$G\in \cG$ we have~$F\cap G\neq \emptyset$.
		
		\begin{theorem}[\cite{F:76}]\label{thm:Franklcrossint}
			Let~$1\leq k,\ell \leq n$ be positive integers and $\cF\subseteq[n]^{(k)}$ and~$\cG\subseteq [n]^{(\ell)}$ be cross-intersecting families.
			Then either~$\vert\partial\cF\vert\geq\vert\cF\vert$ or~$\vert\partial\cG\vert\geq\vert\cG\vert$.
		\end{theorem}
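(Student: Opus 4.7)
The plan is to use the shifting (compression) technique. For $1\leq i<j\leq n$, the shift $S_{ij}$ replaces each $A\in\cF$ by $(A\setminus\{j\})\cup\{i\}$ whenever the substitution is admissible; standard properties are $|S_{ij}(\cF)|=|\cF|$ and $|\partial S_{ij}(\cF)|\leq|\partial\cF|$. The first step is to verify that simultaneous shifting preserves the cross-intersecting property, namely that if $\cF,\cG$ are cross-intersecting then so are $S_{ij}(\cF), S_{ij}(\cG)$ --- a short case analysis according to whether the two sets under consideration have actually been shifted. Iterating reduces the problem to the case where both $\cF$ and $\cG$ are shifted.

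I would then proceed by induction on $n$, with $n=\max(k,\ell)$ as the trivial base (where one of $\cF,\cG$ is at most a singleton). Suppose for contradiction that $|\partial\cF|<|\cF|$ and $|\partial\cG|<|\cG|$. Split each family by containment of the element $1$:
\[
\cF_{\bar 1}=\{F\in\cF:1\notin F\},\qquad \cF_1=\{F\setminus\{1\}:1\in F\in\cF\},
\]
and analogously $\cG_{\bar 1},\cG_1$. Shiftedness yields the inclusion $\partial\cF_{\bar 1}\subseteq\cF_1$ (and analogously for $\cG$), which produces the clean decomposition $|\partial\cF|=|\partial\cF_1|+|\cF_1|$. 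The assumption thus translates to $|\partial\cF_1|<|\cF_{\bar 1}|$ (and analogously $|\partial\cG_1|<|\cG_{\bar 1}|$). Moreover, if $\cF_{\bar 1}=\emptyset$ then $\cF$ is a star through $1$, hence intersecting, and Katona's Theorem~\ref{thm:Katona} already gives $|\partial\cF|\geq|\cF|$, a contradiction; so $\cF_{\bar 1}\neq\emptyset$ and similarly $\cG_{\bar 1}\neq\emptyset$.

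Three of the four restriction pairs are cross-intersecting in $[2,n]$, namely $(\cF_{\bar 1},\cG_{\bar 1})$, $(\cF_{\bar 1},\cG_1)$ and $(\cF_1,\cG_{\bar 1})$ (a direct check using that the missing element $1$ does not affect whether a pair meets). Applying the induction hypothesis to the first of these, we may assume by symmetry that $|\partial\cF_{\bar 1}|\geq|\cF_{\bar 1}|$; combined with $\cF_1\supseteq\partial\cF_{\bar 1}$ this gives $|\cF_1|\geq|\cF_{\bar 1}|$ and hence $|\partial\cF_1|<|\cF_1|$. Induction on $(\cF_1,\cG_{\bar 1})$ then forces $|\partial\cG_{\bar 1}|\geq|\cG_{\bar 1}|$, and the analogous chain for $\cG$ yields $|\cG_1|\geq|\cG_{\bar 1}|$ and $|\partial\cG_1|<|\cG_1|$.

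The main obstacle will be closing the argument in the resulting configuration: both $\cF_1$ and $\cG_1$ have shadow smaller than themselves, yet the pair $(\cF_1,\cG_1)$ need not be cross-intersecting (even for shifted families, as simple examples show), so a direct further induction is unavailable. To finish, I would combine the shadow-of-shadow bound $|\partial\cF_1|\geq|\partial^2\cF_{\bar 1}|$ with iterated Kruskal--Katona to push the inductive pressure one more level down, or alternatively strengthen the inductive statement --- for instance, by incorporating a joint bound on $|\cF|+|\cG|$ versus $|\partial\cF|+|\partial\cG|$ --- so that the leftover case $(\cF_1,\cG_1)$ can be absorbed. A more analytic alternative is to bypass induction in favour of Kruskal--Katona from the start: the assumption $|\partial\cF|<|\cF|$ forces $|\cF|=\binom{x}{k}$ with $x>2k-1$, which together with the cross-intersecting bound $|\cF|\leq\binom{n}{k}-|\partial^{n-\ell-k}\cH|$ for $\cH=\{[n]\setminus G:G\in\cG\}$ should yield a numerical contradiction.
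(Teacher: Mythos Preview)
The paper does not contain a proof of Theorem~\ref{thm:Franklcrossint}; it is quoted from Frankl's 1976 paper~\cite{F:76} and used only as a black-box tool in the proofs of Theorems~\ref{thm:general} and~\ref{thm:crossintgeneral}. There is therefore no proof in the paper to compare your attempt against.

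Regarding your attempt itself: the shifting reduction and the identity $\vert\partial\cF\vert=\vert\partial\cF_1\vert+\vert\cF_1\vert$ (via $\partial\cF_{\bar 1}\subseteq\cF_1$ for shifted $\cF$) are correct and standard. However, the gap you flag is genuine and is not closed by anything you write afterwards. Concretely, after your chain of inductive applications one lands in the configuration
\[
\vert\partial\cF_1\vert<\vert\cF_1\vert,\quad \vert\partial\cG_1\vert<\vert\cG_1\vert,\quad \vert\partial\cF_{\bar 1}\vert\geq\vert\cF_{\bar 1}\vert,\quad \vert\partial\cG_{\bar 1}\vert\geq\vert\cG_{\bar 1}\vert,
\]
and one checks that all three usable cross-intersecting pairs $(\cF_{\bar 1},\cG_{\bar 1})$, $(\cF_{\bar 1},\cG_1)$, $(\cF_1,\cG_{\bar 1})$ are already satisfied by this data, while $(\cF_1,\cG_1)$ need not be cross-intersecting. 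So the induction, as formulated, has nowhere left to go. Your three suggested escapes --- iterating Kruskal--Katona through $\partial^2\cF_{\bar 1}$, strengthening the inductive statement to a joint bound, or the complement-plus-Kruskal--Katona count --- are each reasonable directions, but none is actually carried out, and each requires a substantive additional argument to make precise. As written, this is a sketch that sets up the standard machinery correctly and then stops exactly where the real difficulty begins.
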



		In this paper we establish a local version of Theorem~\ref{thm:Katona}.
		For a family~$\cF\subseteq [n]^{(k)}$ and sets~$A, B\subseteq [n]$, we define
		\begin{align*}
			\cF(A,\overline{B})=\{F\setminus A:\: F\in \cF \text{ such that }A\subseteq F \text{ and }B\cap F=\emptyset\}
		\end{align*}
		as the link of~$\cF$ at~$A$ induced on~$[n]\setminus B$.
		Notice that for~$B=\emptyset$, the family~$\cF(A):=\cF(A,\emptyset)$ is just the usual link of~$\cF$ at~$A$.
		If on the other hand~$A=\emptyset$, the family~$\cF(\overline{B}):=\cF(\emptyset, \overline{B})$ is just the induced hypergraph on the set~$[n]\setminus B$.
		Observe that~$\partial(\cF(A))=(\partial\cF)(A)$ and~$\partial(\cF(A,\overline{B}))\subseteq(\partial\cF)(A,\overline{B})$.
		We write~$\partial\cF(A,\overline{B}):=\partial(\cF(A,\overline{B}))$.

		Frankl~\cite{F:21} conjectured the following local version of Theorem~\ref{thm:Katona}: Let~$n>2k$ and let~$\cF \subseteq [n]^{(k)}$ be an intersecting family. Then there exists a vertex~$i\in [n]$ such that the link of~$i$ satisfies~$\vert\partial\cF(i)\vert\geq\vert\cF(i)\vert$. We prove this conjecture for~$n>\binom{k+1}{2}$.
		\begin{theorem}\label{thm:main}
			Let~$n> \binom{k+1}{2}$ and let~$\cF\subseteq[n]^{(k)}$ be intersecting.
			Then there exists an~$i\in[n]$ such that~$\vert\partial\cF(i)\vert\geq\vert\cF(i)\vert$.
		\end{theorem}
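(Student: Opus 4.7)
The plan is a case analysis based on whether $\bigcap\cF$ is empty. First, if $\bigcap\cF\neq\emptyset$, fix $v_0\in\bigcap\cF$ and pick any $v\in[n]\setminus\{v_0\}$ with $\cF(v)\neq\emptyset$ (such $v$ exists whenever $k\geq 2$ and $\cF\neq\emptyset$, because every $F\in\cF$ contains $v_0$ and $k-1$ further elements). Since $v_0\in F$ and $v\neq v_0$, every set $F\setminus v\in\cF(v)$ contains $v_0$, so $\cF(v)$ is an intersecting $(k-1)$-uniform family on $[n]\setminus\{v\}$; Theorem~\ref{thm:Katona} then gives $|\partial\cF(v)|\geq|\cF(v)|$. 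The case $k=1$ is immediate since $|\cF|\leq 1$.

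Suppose now $\bigcap\cF=\emptyset$. If some vertex lies outside $\bigcup\cF$, the conclusion is trivial at that vertex, so assume $\bigcup\cF=[n]$---this is where the threshold $n>\binom{k+1}{2}$ must be used. My approach is to apply the cross-intersecting Katona theorem (Theorem~\ref{thm:Franklcrossint}) locally: for each $v\in[n]$ write $\cF_v=\{F\in\cF:v\in F\}$, and regard both $\cF(v)$ and $\cF\setminus\cF_v$ as families on $[n]\setminus\{v\}$. They are cross-intersecting, because for any $F\in\cF_v$ and $G\in\cF\setminus\cF_v$ the intersection $F\cap G$ is nonempty and avoids $v$ (since $v\notin G$), so $(F\setminus v)\cap G\neq\emptyset$. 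Theorem~\ref{thm:Franklcrossint} therefore provides, for each $v$, either the desired $|\partial\cF(v)|\geq|\cF(v)|$ (and we are done), or $|\partial(\cF\setminus\cF_v)|\geq|\cF\setminus\cF_v|$. Assuming the first alternative fails for every $v$, the second must hold for all $v$, and the task becomes to derive a contradiction from this together with $n>\binom{k+1}{2}$.

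The main obstacle is precisely this final contradiction, which must extract the threshold $\binom{k+1}{2}$. A straightforward double count gives $\sum_v|\cF\setminus\cF_v|=(n-k)|\cF|$ and $\sum_v|\partial(\cF\setminus\cF_v)|=(n-k)|\partial\cF|+m$, where $m$ denotes the number of $(k-1)$-sets of $\partial\cF$ lying in at least two members of $\cF$; summing the assumed inequalities only reproduces Katona's bound $|\partial\cF|\geq|\cF|$ and does not yet force $n\leq\binom{k+1}{2}$. Closing this gap will require combining the bookkeeping with sharper input---such as a Hilton--Milner-type bound on $|\cF|$ (available because $\bigcap\cF=\emptyset$ forces the covering number $\tau(\cF)\geq 2$), or the strengthened Katona inequalities of \cite{FK:21,LM:21}, or iterating Theorem~\ref{thm:Franklcrossint} on suitable sublinks---to expose the arithmetic contradiction with $n>\binom{k+1}{2}$.
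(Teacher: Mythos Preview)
Your proposal handles the easy cases correctly but is, by your own admission, incomplete in the main case. The one-shot application of Theorem~\ref{thm:Franklcrossint} at each vertex followed by summing cannot yield the threshold~$\binom{k+1}{2}$: as you observed, the double count gives only $(n-k)\vert\partial\cF\vert+m\geq(n-k)\vert\cF\vert$, which is weaker than Katona's theorem and carries no information about~$n$. None of the suggested fixes obviously produces the specific arithmetic $1+2+\cdots+k$ either; Hilton--Milner and the strengthenings of~\cites{FK:21,LM:21} control~$\vert\cF\vert$ or~$\vert\partial\cF\vert-\vert\cF\vert$ globally, not the set of vertices~$v$ at which $\vert\partial\cF(v)\vert<\vert\cF(v)\vert$.

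The paper's proof is precisely the iteration you mention in your final clause but do not carry out. The missing idea is to work with links~$\cF(A)$ at $i$-sets~$A$ for all $1\leq i\leq k$, tracking a hereditary strengthening of the target inequality (``pseudo-intersecting'': $\vert\partial\cF(A,\overline{X})\vert\geq\vert\cF(A,\overline{X})\vert$ for every~$X$). One starts with $M_k=[n]\setminus F$ for some~$F\in\cF$, so that every $k$-subset of~$M_k$ has empty, hence pseudo-intersecting, link; then, descending from~$i$ to~$i-1$, either every $(i-1)$-subset of~$M_i$ already has pseudo-intersecting link, or there is a single bad $B\in M_i^{(i-1)}$. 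Removing~$B$ suffices, because for any $A\in (M_i\setminus B)^{(i-1)}$ the sets~$A,B$ are \emph{disjoint}, which makes the restricted links $\cF(A,\overline{\,\cdot\,})$ and $\cF(B,\overline{\,\cdot\,})$ cross-intersecting; Theorem~\ref{thm:Franklcrossint} then converts the failure at~$B$ into success at every such~$A$. In total one deletes at most $k+(k-1)+\cdots+1=\binom{k+1}{2}$ vertices, and that is exactly where the threshold comes from. Your single-level argument pairs~$\cF(v)$ with the large family $\cF\setminus\cF_v$, so a failure at~$v$ yields only a soft averaged inequality; the paper instead pairs two \emph{small} links at disjoint $(i-1)$-sets, so one bad witness costs only $i-1$ vertices.
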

	
		Frankl~\cite{F:21} further conjectured that a local version of Theorem \ref{thm:Franklcrossint} should hold. We prove it for $n>k\ell$.
		
		
		
		\begin{theorem}\label{thm:crossint}
			Suppose that for~$n>k\ell$, the families~$\cF\subseteq [n]^{(k)}$ and~$\cG\subseteq [n]^{(\ell)}$ are cross-intersecting.
			Then there is some~$i\in[n]$ such that $$\vert\partial\cF(i)\vert\geq\vert\cF(i)\vert\text{ or }\vert\partial\cG(i)\vert\geq\vert\cG(i)\vert\,.$$
		\end{theorem}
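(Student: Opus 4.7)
I proceed by contradiction, supposing that for every $i\in[n]$ both $|\partial\cF(i)|<|\cF(i)|$ and $|\partial\cG(i)|<|\cG(i)|$. Since $|\cF(i)|>|\partial\cF(i)|\geq 0$ forces $|\cF(i)|\geq 1$ at every $i$ (and analogously for $\cG$), every vertex lies in some member of both families, so summing the strict inequalities $|\cF(i)|\geq|\partial\cF(i)|+1$ over $i\in[n]$ yields
\begin{equation*}
	k|\cF|\geq(k-1)|\partial\cF|+n\qand\ell|\cG|\geq(\ell-1)|\partial\cG|+n.
\end{equation*}

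The key observation is that for every $i\in[n]$, the pair $\bigl(\cF(i),\cG(\overline{\{i\}})\bigr)$ is cross-intersecting: if $F\in\cF$ contains $i$ and $G\in\cG$ avoids $i$, then $\emptyset\neq F\cap G\subseteq[n]\setminus\{i\}$ forces $(F\setminus i)\cap G\neq\emptyset$. Applying Theorem \ref{thm:Franklcrossint} to this pair, and using the contradictory hypothesis to rule out the $\cF$-alternative, yields $|\partial\cG(\overline{\{i\}})|\geq|\cG(\overline{\{i\}})|$ for every $i$; by the symmetric cross-intersecting pair $\bigl(\cF(\overline{\{i\}}),\cG(i)\bigr)$ we likewise obtain $|\partial\cF(\overline{\{i\}})|\geq|\cF(\overline{\{i\}})|$ for every $i$. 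Summing the latter over $i$ and double-counting (each $D\in\partial\cF$ contributes to $|\partial\cF(\overline{\{i\}})|$ for all $i\notin D$ except possibly one uniquely extendable point) produces $(n-k+1)|\partial\cF|\geq(n-k)|\cF|$, and analogously for $\cG$. Combining with the averaged hypotheses above yields $|\cF|\geq n-k+1$ and $|\cG|\geq n-\ell+1$, and a final appeal to the global Theorem \ref{thm:Franklcrossint} combined with these bounds sharpens, without loss of generality, to $|\cF|\geq n>k\ell$.

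The main obstacle is then to convert these estimates into a genuine contradiction using the full strength of $n>k\ell$. The plan is to fix some $G_0\in\cG$ and exploit cross-intersection via $|\cF|\leq\sum_{j\in G_0}|\cF(j)|$, which forces some $j^*\in G_0$ with $|\cF(j^*)|\geq|\cF|/\ell>k$. Applying Kruskal--Katona to the link $\cF(j^*)$, whose shadow is strictly smaller than itself by the contradictory hypothesis, forces $|\cF(j^*)|\geq\binom{2k-2}{k-1}$, and combining these local structural bounds for both families with the standard cross-intersection inequality $\sum_j|\cF(j)||\cG(j)|\geq|\cF||\cG|$ should yield a numerical contradiction precisely matched to the threshold $n>k\ell$. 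This last counting step, where the full strength of the hypothesis $n>k\ell$ is finally used, is the technical heart of the proof.
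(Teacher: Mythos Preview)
Your opening moves are sound: the double–counting that yields $k|\cF|\geq(k-1)|\partial\cF|+n$, the observation that $(\cF(i),\cG(\overline{\{i\}}))$ is cross-intersecting, and the resulting bound $(n-k+1)|\partial\cF|\geq(n-k)|\cF|$ are all correct, and combining them with a global application of Theorem~\ref{thm:Franklcrossint} does give (without loss of generality) $|\cF|\geq n>k\ell$. The problem is what comes next.

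Everything you have derived is a \emph{lower} bound. The Kruskal--Katona step you propose again produces a lower bound on $|\cF(j^*)|$ (and the stated threshold $\binom{2k-2}{k-1}$ is a bit off: for $k=3$ a $2$-uniform family with five edges on four vertices already has smaller shadow). The ``standard cross-intersection inequality'' $\sum_j|\cF(j)||\cG(j)|\geq|\cF||\cG|$ is, once more, a lower bound. Nowhere do you obtain an upper bound against which these could collide, and the sentence ``should yield a numerical contradiction precisely matched to the threshold $n>k\ell$'' is not a proof but a hope. You yourself flag this as ``the technical heart'', and it is simply absent. I do not see how to complete the argument along these lines: large families with all links satisfying $|\partial\cF(i)|<|\cF(i)|$ can have arbitrarily large degrees, so a pigeonhole bound of the form $|\cF(j^*)|>k$ carries no contradiction by itself.

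For comparison, the paper does not argue by contradiction and does not use global size estimates at all. It introduces the notion of a \emph{pseudo-intersecting} link and proves (Theorem~\ref{thm:crossintgeneral}) by a downward induction on the uniformity of the link that there is a set $M_1$ of at least $n-k\ell$ vertices whose links are pseudo-intersecting (for $\cF$ or for $\cG$). The induction step (Lemma~\ref{lem:shadow}) shows that if all one-vertex extensions $\cF(A\cup x)$ are pseudo-intersecting and the ``residual'' family $\cF(A,\overline{M\setminus A})$ is as well, then $\cF(A)$ is pseudo-intersecting; Theorem~\ref{thm:Franklcrossint} is invoked locally to force the residual condition after deleting at most $\ell$ vertices per step. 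The hypothesis $n>k\ell$ then just guarantees $M_1\neq\emptyset$. This is a constructive, structural argument rather than a counting one, and it is what makes the threshold $n>k\ell$ appear naturally.
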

		
	
		Theorems~\ref{thm:main} and~$\ref{thm:crossint}$ will be deduced from a more general result. To formulate it compactly, let us further introduce the following definition.
		\begin{dfn}
			We say that~$\cF\subseteq[n]^{(k)}$ is \textit{pseudo-intersecting} if~$\vert\partial\cF(\overline{X})\vert\geq\vert\cF(\overline{X})\vert$ for all~$X\subseteq[n]$.
		\end{dfn}

		Note that for~$A\subseteq [n]$ and~$\cF\subseteq[n]^{(k)}$, the link~$\cF(A)$ being pseudo-intersecting means that~$\vert\partial\cF(A,\overline{X\setminus A})\vert\geq\vert\cF(A,\overline{X\setminus A})\vert$ for every~$X\subseteq [n]$. 
		The term pseudo-intersecting comes from the following observation: If~$\cF\subseteq [n]^{(k)}$ is intersecting, then for every~$X\subseteq [n]$ we have~$\vert\partial\cF(\overline{X})\vert\geq\vert\cF(\overline{X})\vert$.
		This is a consequence of Theorem~\ref{thm:Katona} and the fact that~$\cF(\overline{X})\subseteq \cF$ is intersecting.
		Note that if~$\cF(A)$ is pseudo-intersecting for~$A\in [n]^{(j)}$ with $j<k$, then~$\vert\partial\cF(A)\vert\geq\vert\cF(A)\vert$. That is, the pseudo-intersecting property of~$\cF(i)$ implies the local version of Katona's intersecting theorem.
		

		The next result shows that one can always find a pseudo-intersecting link in an intersecting family.
		In particular, it implies Theorem~\ref{thm:main} for~$n>\binom{k+1}{2}$.
		
		\begin{theorem}\label{thm:general}
			Let~$\cF\subseteq[n]^{(k)}$ be intersecting.
			Then there are sets~$M_{1}\subseteq \dots\subseteq M_{k}\subseteq[n]$ with~$\vert M_i\vert\geq n-\sum_{i\leq j\leq k}j$ such that for all~$A\in M_i^{(i)}$, the family~$\cF(A)$ is pseudo-intersecting.
		\end{theorem}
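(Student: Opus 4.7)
I will proceed by downward induction on $i$, constructing the nested chain $M_k\supseteq M_{k-1}\supseteq\dots\supseteq M_1$ so that at each step we remove at most $i$ additional elements when passing from $M_{i+1}$ to $M_i$; this matches the claimed size bounds since $|M_i|\geq|M_{i+1}|-i\geq n-\sum_{j=i}^{k}j$. For the base case $i=k$, fix any $F_0\in\cF$ and set $M_k:=[n]\setminus F_0$, so $|M_k|=n-k$. For each $A\in M_k^{(k)}$, intersectingness forces $A\notin\cF$ (since $A\cap F_0=\emptyset$), and hence $\cF(A)=\emptyset$ is vacuously pseudo-intersecting.

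The main tool linking pseudo-intersectingness to the combinatorial structure of $\cF$ is the following short lemma, a direct consequence of Theorem~\ref{thm:Katona}: \emph{if $\cF(A)$ is not pseudo-intersecting, then there exist $F_1,F_2\in\cF$ with $F_1\cap F_2=A$.} Indeed, picking $X$ that witnesses $|\partial\cF(A,\overline{X\setminus A})|<|\cF(A,\overline{X\setminus A})|$ and applying Theorem~\ref{thm:Katona} to $\cG:=\cF(A,\overline{X\setminus A})$ shows that $\cG$ is not intersecting, so any two disjoint $G_1,G_2\in\cG$ provide $F_j:=A\cup G_j\in\cF$ with $F_1\cap F_2=A$. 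With this lemma in hand, the inductive step from $M_{i+1}$ to $M_i$ reduces to a covering problem: letting $\cB\subseteq M_{i+1}^{(i)}$ denote the collection of bad $i$-subsets (those $A$ for which $\cF(A)$ fails to be pseudo-intersecting), it suffices to exhibit $S\subseteq M_{i+1}$ with $|S|\leq i$ meeting every $A\in\cB$, and then set $M_i:=M_{i+1}\setminus S$.

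The main obstacle will be establishing this covering bound $\tau(\cB)\leq i$. My plan is to show that $\cB$ is itself an intersecting family on $M_{i+1}$, from which $\tau(\cB)\leq i$ follows immediately by taking any fixed $A_0\in\cB$ as a cover. Assume $A,A'\in\cB$ are disjoint; the key lemma supplies witness pairs $F_j^A=A\cup G_j^A$ and $F_j^{A'}=A'\cup G_j^{A'}$ with $G_1^A\cap G_2^A=\emptyset$ (and analogously for $A'$), and intersectingness of $\cF$ forces each of the four cross-intersections $F_l^A\cap F_{l'}^{A'}$ to be non-empty. Combined with $A,A'\subseteq[n]\setminus F_0$ (which forces $G_j^A\cap F_0\neq\emptyset$) and the various disjointness relations, these four constraints should yield a contradiction. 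In the cleanest regime $i=k-1$, each $G_j^A$ is a single element of $F_0$ and the four constraints collapse to the impossible equality $g_1^A=g_2^A$; for general $i<k-1$ the analysis is more delicate and I expect it to require either a careful choice of the restriction set $X$ in the key lemma (for instance, chosen to erase the part of each $G_j^A$ outside $F_0$) or a Bollob\'as-type set-pair inequality applied to auxiliary pairs built from the witnesses.
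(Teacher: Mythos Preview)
Your base case and your key lemma are correct, and your overall target---that the family $\cB\subseteq M_{i+1}^{(i)}$ of bad $i$-sets is intersecting---is exactly what the paper (implicitly) establishes. The gap is in how you propose to prove it. You extract from each bad $A$ a \emph{single} witness pair $F_1^A,F_2^A\in\cF$ with $F_1^A\cap F_2^A=A$ and then hope that, for disjoint $A,A'$, the four constraints $F_l^A\cap F_{l'}^{A'}\neq\emptyset$ force a contradiction. They do not: for $k=3$ the configuration $F_1=\{a,1,2\}$, $F_2=\{a,3,4\}$, $F_1'=\{b,1,3\}$, $F_2'=\{b,2,4\}$ is intersecting and satisfies all four constraints with $F_1\cap F_2=\{a\}$ and $F_1'\cap F_2'=\{b\}$. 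So the single-witness-pair data alone cannot yield the contradiction for $i<k-1$; your $i=k-1$ argument works only because $|G_j|=1$ forces $G_j\subseteq F_0$, which kills the cross terms. The Bollob\'as-type fix you float is not obviously applicable, and ``choosing $X$ carefully'' runs into the problem that $X$ is determined by the failure of pseudo-intersection, not by you.

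More fundamentally, your inductive step never invokes the induction hypothesis that $\cF(A^+)$ is pseudo-intersecting for every $A^+\in M_{i+1}^{(i+1)}$. This is the crucial missing input. The paper's argument runs as follows: fix one bad $B\in M_{i+1}^{(i)}$ together with its witnessing set $M\supseteq M_{i+1}$, and observe that for any $A\in (M_{i+1}\setminus B)^{(i)}$ and any $M'\supseteq M_{i+1}$, the families $\cF(A,\overline{M'\setminus A})$ and $\cF(B,\overline{M\setminus B})$ are \emph{cross}-intersecting (since $A,B$ are disjoint subsets of $M_{i+1}\subseteq M,M'$). Now apply Frankl's cross-intersecting shadow theorem (Theorem~\ref{thm:Franklcrossint}), not Katona's: since the $B$-side fails the shadow inequality by choice, the $A$-side must satisfy it, so $\cF(A,\overline{M_{i+1}\setminus A})$ is pseudo-intersecting. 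This is still only a \emph{restricted} statement; the upgrade to ``$\cF(A)$ is pseudo-intersecting'' is exactly where the induction hypothesis enters, via the paper's Lemma~\ref{lem:shadow}, which says that if $\cF(A,\overline{M_{i+1}\setminus A})$ is pseudo-intersecting and $\cF(A\cup x)$ is pseudo-intersecting for every $x\in M_{i+1}\setminus A$, then so is $\cF(A)$. Both the use of Theorem~\ref{thm:Franklcrossint} in place of Theorem~\ref{thm:Katona} and the bootstrapping Lemma~\ref{lem:shadow} are essential, and both are absent from your sketch.
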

		
		Note that the link of every subset of~$M_1$ is pseudo-intersecting.
		In particular, for any intersecting family~$\cF\subseteq [n]^{(k)}$, the inequality~$\vert\partial\cF(i)\vert\geq\vert\cF(i)\vert$ holds for all but at most~$\binom{k+1}{2}$ vertices~$i$.
		
		Our general result for cross-intersecting families reads as follows.
		For~$n>k\ell$ it implies Theorem~\ref{thm:crossint}.
		
		\begin{theorem}\label{thm:crossintgeneral}
			Let~$\cF\subseteq [n]^{(k)}$ and~$\cG\subseteq [n]^{(\ell)}$ be cross-intersecting.
			Then there are sets~$M_1\subseteq\dots\subseteq M_{k}\subseteq[n]$ with~$\vert M_i\vert\geq n-(k+1-i)\ell$ such that one of the following holds.
			\begin{itemize}
				\item The familiy~$\cF(A)$ is pseudo-intersecting for all~$i\in[k]$ and~$A\subseteq M_i^{(i)}$,
				\item or~$\cG(B)$ is pseudo-intersecting for all~$B\subseteq M_2$.
			\end{itemize}
		\end{theorem}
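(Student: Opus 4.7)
My plan is to adapt the inductive construction behind Theorem~\ref{thm:general}, replacing Theorem~\ref{thm:Katona} with Theorem~\ref{thm:Franklcrossint} as the engine. The central observation, used at every stage, is the following \emph{local dichotomy}: for any disjoint $A,X\subseteq[n]$, the families $\cF(A,\overline X)$ and $\cG(\overline{A\cup X})$ form a cross-intersecting pair on the universe $[n]\setminus(A\cup X)$, since any $F\in\cF$ with $A\subseteq F$ and $F\cap X=\emptyset$, and any $G\in\cG$ with $G\cap(A\cup X)=\emptyset$, satisfy $(F\setminus A)\cap G=F\cap G\neq\emptyset$. Theorem~\ref{thm:Franklcrossint} therefore forces at least one of the two to satisfy the Katona shadow bound.

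With this in hand, I would build the chain $M_k\supseteq M_{k-1}\supseteq\dots\supseteq M_1$ top-down. Starting with $M_k$: declare $v\in[n]$ to be \emph{$\cF$-bad} if $\cF(v)$ is not pseudo-intersecting, so that a witness $X_v$ with $\vert\partial\cF(v,\overline{X_v})\vert<\vert\cF(v,\overline{X_v})\vert$ exists; the local dichotomy then produces the Katona bound $\vert\partial\cG(\overline{X_v\cup\{v\}})\vert\geq\vert\cG(\overline{X_v\cup\{v\}})\vert$ for every such $v$. If at most $\ell$ vertices are $\cF$-bad, take $M_k:=[n]\setminus\{\text{bad vertices}\}$. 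Passing to level $i<k$, I repeat the procedure on extensions $A\cup\{v\}$ of $A\in M_{i+1}^{(i)}$: each extension whose link fails to be pseudo-intersecting supplies, via the dichotomy, a Katona bound for a restriction of $\cG$, and I remove the offending $v$'s. The dichotomy lets us absorb an additional $\ell$ removed vertices per level, yielding $\vert M_i\vert\geq n-(k+1-i)\ell$.

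The main obstacle, and the point where the argument really departs from the single-family case, is the \emph{switching step}: if the greedy removal at some level needs more than $\ell$ vertices, I must conclude that the second bullet holds, i.e.\ that $\cG(B)$ is pseudo-intersecting for every $B\subseteq M_2$. My plan is to argue by contradiction. A hypothetical failure witness $Y$ for some $B\subseteq M_2$, giving $\vert\partial\cG(B,\overline Y)\vert<\vert\cG(B,\overline Y)\vert$, triggers the local dichotomy on the cross-intersecting pair $(\cF(\overline{B\cup Y}),\cG(B,\overline Y))$ and yields $\vert\partial\cF(\overline{B\cup Y})\vert\geq\vert\cF(\overline{B\cup Y})\vert$. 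I expect this to be inconsistent with the many Katona-type inequalities for $\cG$ already produced by the $\cF$-bad vertices, provided one aligns $B$, $Y$, and the witnesses $X_v$ so that the relevant restrictions coincide.

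The delicate points I anticipate are, first, preserving the chain $M_1\subseteq\dots\subseteq M_k$ throughout the top-down process (so that the removal at level $i$ is compatible with the removals already made at higher levels), and second, showing that the switch to the $\cG$-side can always be arranged at level~$2$, which is precisely what produces the condition ``$B\subseteq M_2$'' in the second bullet. Once this switching threshold is pinned down, the size accounting $\vert M_i\vert\geq n-(k+1-i)\ell$ follows directly from the per-level cost of $\ell$ vertices forced by the local dichotomy.
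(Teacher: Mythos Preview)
Your proposal has a genuine gap: you run the dichotomy on the wrong side of the pair, and this is why you cannot justify the ``at most $\ell$ removals per level'' claim or pin down the switching step.

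Concretely, at level $i$ you look for $\cF$-bad objects (sets $A$ with $\cF(A)$ not pseudo-intersecting), and for each such $A$ the dichotomy hands you a Katona bound on some restriction of $\cG$. But nothing in this direction limits \emph{how many} $\cF$-bad sets there are; accumulating shadow inequalities for various restrictions of $\cG$ does not prevent there being, say, $n$ bad vertices for $\cF$. Your hope that ``the many Katona-type inequalities for $\cG$'' would be inconsistent with a single failure $\vert\partial\cG(B,\overline Y)\vert<\vert\cG(B,\overline Y)\vert$ is not substantiated: those inequalities live on different (and essentially unrelated) induced subfamilies $\cG(\overline{A\cup X_A})$, and there is no packing or counting that ties them together. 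Also, your initialisation at level $k$ is off: there one needs $\cF(A)$ pseudo-intersecting for $k$-sets $A$, not for single vertices.

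The paper's organisation reverses the roles. At each level $i$ it asks whether \emph{some} $B\in M_i^{(\le\ell)}$ makes $\cG(B,\overline{M_i\setminus B})$ fail to be pseudo-intersecting. If such a $B$ exists, one fixes it once and for all, sets $M_{i-1}=M_i\setminus B$, and observes that for every $A\in M_{i-1}^{(i-1)}$ the pair $\big(\cF(A,\overline{M'\setminus A}),\,\cG(B,\overline{X\setminus B})\big)$ is cross-intersecting; by Theorem~\ref{thm:Franklcrossint} and the choice of $B$, all these $\cF$-links satisfy the Katona bound simultaneously, and then Lemma~\ref{lem:shadow} upgrades this to pseudo-intersecting. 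Thus a \emph{single} bad set on the $\cG$-side fixes \emph{all} $\cF$-links at that level, which is exactly what yields the cost of $\ell$ vertices per level. If no such $B$ exists, then $\cG(B,\overline{M_i\setminus B})$ is pseudo-intersecting for every $B\in M_i^{(\le\ell)}$, and Lemma~\ref{lem:shadow} applied inside $M_i$ (by backward induction on $\vert B\vert$) immediately gives that $\cG(B)$ is pseudo-intersecting for all $B\subseteq M_i$; one then sets $M_1=\dots=M_i$. So the ``switching'' you flagged as delicate is in fact the trivial branch once the dichotomy is placed on the $\cG$-side. The base case is likewise cleaner: take any $G\in\cG$ and set $M_k=[n]\setminus G$, so that $\cF\cap M_k^{(k)}=\emptyset$ and all level-$k$ links are vacuously pseudo-intersecting.
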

		
		A family~$\cF\subseteq \cP([n])$ is called \textit{$t$-union} if~$\vert F\cup F' \vert \leq t$ for every~$F, F' \in \cF$. A family~$\cF$ is an \textit{antichain} if~$F\not \subseteq F'$ for every $F, F'\in \cF$. Kiselev, Kupavskii and Patk\'{o}s made the following conjecture on the minimum degree of~$(2\ell+1)$-union antichains.\footnote{They formulated their conjecture as an upper bound on the diversity of an intersecting antichain, which is equivalent.}
		
		\begin{conj}[\cite{F:21}]\label{conj:Kupavskii}
			Suppose that~$1\leq 2\ell+1<n$, and that~$\cF\subseteq\ccP([n])$ is a~$(2\ell+1)$-union antichain.
			Then~$\delta(\cF)\leq\binom{n-1}{\ell-1}$.
		\end{conj}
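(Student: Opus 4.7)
I would decompose~$\cF=\bigcup_{k}\cF_{k}$ according to set sizes, where $\cF_k:=\cF\cap[n]^{(k)}$. The $(2\ell+1)$-union condition gives $|F\cap F'|\geq|F|+|F'|-(2\ell+1)$, so $\cF_k$ is $(2k-2\ell-1)$-intersecting whenever $k\geq\ell+1$; in particular $\cF_{\ell+1}$ is intersecting. Theorem~\ref{thm:main} then furnishes a vertex~$i\in[n]$ with $|\partial\cF_{\ell+1}(i)|\geq|\cF_{\ell+1}(i)|$, and the goal is to show that this vertex satisfies $\deg_{\cF}(i)\leq\binom{n-1}{\ell-1}$.

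\textbf{The central pair of levels.} The antichain condition yields the key disjointness $\cF_{\ell}(i)\cap\partial\cF_{\ell+1}(i)=\emptyset$ inside $([n]\setminus\{i\})^{(\ell-1)}$: a common member $A$ would give $A\cup\{i\}\in\cF_\ell$ and $A\cup\{i\}\subset F$ for some $F\in\cF_{\ell+1}$, violating the antichain property. Combined with Theorem~\ref{thm:main} this yields
\[
|\cF_\ell(i)|+|\cF_{\ell+1}(i)|\ \leq\ |\cF_\ell(i)|+|\partial\cF_{\ell+1}(i)|\ \leq\ \binom{n-1}{\ell-1},
\]
which already matches the target bound on the two central levels.

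\textbf{Absorbing the remaining levels.} It remains to control the contributions from $k\geq\ell+2$ and $k\leq\ell-1$. For $k\geq\ell+2$ the link $\cF_k(i)$ is $(2k-2\ell-2)$-intersecting on $[n]\setminus\{i\}$, so by the Frankl complete intersection theorem $|\cF_k(i)|$ is smaller than $\binom{n-1}{\ell-1}$ by a factor polynomial in $n^{-1}$. For $k\leq\ell-1$ the same antichain disjointness, iterated through the shadow chain, gives $\cF_k(i)\cap\partial^{(\ell-k)}\cF_\ell(i)=\emptyset$ in $([n]\setminus\{i\})^{(k-1)}$ and similar disjointness from iterated shadows of higher levels; Kruskal--Katona then confines each $|\cF_k(i)|$ to a small fraction of $\binom{n-1}{k-1}$. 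Summing the subdominant contributions $\sum_{k\neq\ell,\ell+1}|\cF_k(i)|$ and folding them into the slack of the central bound should yield the desired total $\deg_{\cF}(i)\leq\binom{n-1}{\ell-1}$ in the regime $n\gg\ell$.

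\textbf{Main obstacle.} Two delicate points remain. First, the inequalities above naturally split into a central bound and a residual, and showing that the residual can be \emph{absorbed} rather than added on top requires identifying slack in the non-extremal cases of $\cF_\ell$ and $\cF_{\ell+1}$; the extremal case $\cF=[n]^{(\ell)}$, in which every level other than $\ell$ is empty, suggests that this should be possible, but it needs a stability-type input. Second, and more importantly, Theorem~\ref{thm:main} selects a single vertex good for $\cF_{\ell+1}$, whereas the absorption estimates for the other levels a priori favor different vertices. To pick a common good vertex I would apply Theorem~\ref{thm:general} separately to each $\cF_k$ with $k\geq\ell+1$, obtaining nested towers $M_1^{(k)}\subseteq\dots\subseteq M_k^{(k)}$ of pseudo-intersecting links with $|M_1^{(k)}|\geq n-\binom{k+1}{2}$, and intersect them to extract a common vertex; the resulting numerological condition holds comfortably when $n\gg 2\ell+1$, but in the boundary regime $n$ near $2\ell+1$ the towers may fail to intersect, and an auxiliary shifting-compression argument would likely be needed. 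Closing this boundary gap is where I expect the technical heart of the proof to lie.
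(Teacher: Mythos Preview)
The statement you are attempting to prove is presented in the paper as an open \emph{conjecture}, not a theorem. The paper gives no self-contained proof; it only remarks that Frankl's reduction (Proposition~3.4(i) of~\cite{F:21}) together with Theorem~\ref{thm:main} yields the conjecture in the restricted range $n>\binom{\ell+2}{2}$. So there is no ``paper's own proof'' in full generality to compare against, and the boundary regime you flag at the end is genuinely open.

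Your central-levels argument is correct and is essentially the heart of the matter: $\cF_{\ell+1}$ is intersecting, Theorem~\ref{thm:main} supplies a vertex $i$ with $\vert\partial\cF_{\ell+1}(i)\vert\geq\vert\cF_{\ell+1}(i)\vert$, the antichain property forces $\cF_\ell(i)$ and $\partial\cF_{\ell+1}(i)$ to be disjoint inside $([n]\setminus i)^{(\ell-1)}$, and together these give $\vert\cF_\ell(i)\vert+\vert\cF_{\ell+1}(i)\vert\leq\binom{n-1}{\ell-1}$. This is exactly the mechanism behind Frankl's Proposition~3.4(i), and note that the hypothesis of Theorem~\ref{thm:main} with $k=\ell+1$ is precisely $n>\binom{\ell+2}{2}$, matching the range the paper claims.

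Where your plan diverges from the paper is in the handling of the remaining levels. The paper invokes Frankl's reduction as a black box and never attempts a level-by-level absorption; Frankl's argument first reduces an arbitrary $(2\ell+1)$-union antichain to one for which only the two central levels matter, so the multi-level problem you describe does not arise. Your ``absorbing the remaining levels'' paragraph, by contrast, is not a proof: the bounds you cite for $k\geq\ell+2$ are $o\bigl(\binom{n-1}{\ell-1}\bigr)$ only for $n$ large in $\ell$, and there is no slack in the central inequality when $\cF_\ell(i)$ is already full (the extremal example $\cF=[n]^{(\ell)}$ has zero slack). Intersecting the towers from Theorem~\ref{thm:general} across all $k\leq 2\ell+1$ also costs $\sum_k\binom{k+1}{2}$ vertices, strictly worse than the single $\binom{\ell+2}{2}$ the paper needs.

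In short: your two-level core matches what the paper (via Frankl) actually uses; your multi-level extension is unnecessary once one has Frankl's reduction, and as written it neither closes the boundary gap nor improves the range. If you want to push below $n=\binom{\ell+2}{2}$, the bottleneck is the range in Theorem~\ref{thm:main}, not the reduction.
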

		
		In~\cite{F:21}, Frankl solved Conjecture~\ref{conj:Kupavskii} for~$n\geq\ell^3+\ell^2+\frac{3}{2}\ell$. He also noted that one could obtain better bounds by proving a local version of Theorem~\ref{thm:Katona} (see Proposition 3.4(i), \cite{F:21}). In particular, by using his reduction, Theorem~\ref{thm:main} implies Conjecture~\ref{conj:Kupavskii} for $n>\binom{\ell+2}{2}$.
		

	\section{Tools}\label{sec:tools}
	
		In this section we introduce the main technical lemma in the proof. Roughly speaking, in our proof we will inductively construct sets~$M_i\subseteq M_{i+1}$ such that~$\cF(A)$ is pseudo-intersecting for all~$A\subseteq M_i^{(i)}$.
		The following lemma will help with the induction step.
		We directly formulate it in the setup in which it will be used.
		
		\begin{lemma}\label{lem:shadow}
			Let~$\cF\subseteq[n]^{(k)}$ and let~$A,M\subseteq[n]$.
			If~$\cF(A,\overline{M\setminus A})$ is pseudo-intersecting and~$\cF(A\cup x)$ is pseudo-intersecting for all~$x\in M\setminus A$, then~$\cF(A)$ is pseudo-intersecting.
		\end{lemma}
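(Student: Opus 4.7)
My plan is to show, by induction on the quantity $\vert(M\setminus A)\setminus X\vert$, that
\[
\vert\partial\cF(A,\overline{X})\vert\geq\vert\cF(A,\overline{X})\vert
\]
holds for every $X\subseteq[n]$; this is exactly what it means for $\cF(A)$ to be pseudo-intersecting. (When $X\cap A\neq\emptyset$, both sides vanish, so I may assume $X\cap A=\emptyset$.) For the rest of the sketch, write $B:=M\setminus A$.

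For the base case $B\subseteq X$, I observe the identity $\cF(A,\overline{X})=\cF(A,\overline{B})(\overline{X\setminus B})$: forbidding $X$ can be done in two stages, first forbidding $B$, then further forbidding $X\setminus B$. The pseudo-intersecting hypothesis on $\cF(A,\overline{B})$, applied with forbidden set $X\setminus B$, then immediately yields the desired inequality.

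For the inductive step, fix some $x\in B\setminus X$ and split both sides of the target inequality by whether a given set contains $x$. On the family side,
\[
\vert\cF(A,\overline{X})\vert=\vert\cF(A\cup x,\overline{X})\vert+\vert\cF(A,\overline{X\cup x})\vert.
\]
For the shadow, the map $H\mapsto H\setminus x$ is a bijection from $\{H\in\partial\cF(A,\overline{X}):x\in H\}$ onto $\partial\cF(A\cup x,\overline{X})$: such an $H=G\setminus y$ forces $x\in G$ and $y\neq x$, so $G\setminus x\in\cF(A\cup x,\overline{X})$, and the correspondence is clearly invertible. The complementary piece $\{H\in\partial\cF(A,\overline{X}):x\notin H\}$ contains $\partial\cF(A,\overline{X\cup x})$, coming from sets $G\in\cF(A,\overline{X})$ with $x\notin G$. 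Since the two pieces are disjoint,
\[
\vert\partial\cF(A,\overline{X})\vert\geq\vert\partial\cF(A\cup x,\overline{X})\vert+\vert\partial\cF(A,\overline{X\cup x})\vert.
\]
Applying pseudo-intersecting of $\cF(A\cup x)$ (with forbidden set $X$) to the first term and the inductive hypothesis (valid since $\vert B\setminus(X\cup x)\vert<\vert B\setminus X\vert$) to the second gives the lower bound $\vert\cF(A\cup x,\overline{X})\vert+\vert\cF(A,\overline{X\cup x})\vert$, matching the count of $\vert\cF(A,\overline{X})\vert$.

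I do not expect a genuine obstacle here; the only moment that needs a little care is the shadow bijection/inclusion, which has to be verified element-by-element to be sure no double-counting arises between the two pieces. The key conceptual point is identifying $\vert B\setminus X\vert$ as the correct induction parameter: this choice is precisely what lets the two hypotheses of the lemma fit together, since peeling off one $x\in B$ at a time replaces the inequality at $(A,X)$ with one instance of the pseudo-intersecting hypothesis for $\cF(A\cup x)$ plus one instance for $(A,X\cup x)$, driving $X$ toward the base case $B\subseteq X$.
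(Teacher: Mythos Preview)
Your proof is correct and follows essentially the same approach as the paper's: both arguments reduce the inequality for $(A,X)$ to the inequality for $(A,X\cup M)$ by peeling off one element $x\in (M\setminus A)\setminus X$ at a time, using the same disjoint decomposition $\cF(A,\overline{X})=\cF(A,\overline{X\cup x})\,\dot\cup\,(\cF(A\cup x,\overline{X}))^{\cup x}$ and the corresponding containment on shadows. The only cosmetic difference is that the paper phrases this as ``if the inequality holds for $X'$ then it holds for $X'\setminus x$'' and iterates from $X\cup M$ down to $X$, whereas you phrase it as induction on $\vert B\setminus X\vert$; these are the same argument.
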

		
		\begin{proof}
			Let~$X\subseteq[n]$ be given.
			We need to show that~$\vert\partial\cF(A,\overline{X\setminus A})\vert\geq\vert\cF(A,\overline{X\setminus A})\vert$.
			Note that~$\vert\partial\cF(A,\overline{(X\cup M)\setminus A})\vert\geq\vert\cF(A,\overline{(X\cup M)\setminus A})\vert$ holds by assumption.
			Hence, it is enough to show that if
			\begin{align}\label{eq:X'hypo}
				\vert\partial\cF(A,\overline{X'\setminus A})\vert\geq\vert\cF(A,\overline{X'\setminus A})\vert
			\end{align}
			holds for some~$X\subsetneq X'\subseteq X\cup M$, then~$\vert\partial\cF(A,\overline{(X'\setminus x)\setminus A})\vert\geq\vert\cF(A,\overline{(X'\setminus x)\setminus A})\vert$ for some~$x\in X'\setminus (X\cup A)$.
			So let~$X'$ with~$X\subsetneq X'\subseteq X\cup M$ satisfy~\eqref{eq:X'hypo} and let~$x\in X'\setminus (X\cup A)$ be arbitrary. Given a family~$\cH$ and a vertex~$x$, let~$\cH^{\cup x}=\{H\cup\{x\}:\: H\in \cH\}$.
			Observe that
			\begin{align}\label{eq:shrinkfam1}
				\cF(A,\overline{(X'\setminus x)\setminus A})=\cF(A,\overline{X'\setminus A})\dcup(\cF(A\cup x,\overline{X'\setminus (A\cup x)}))^{\cup x}\,
			\end{align}
			and
			\begin{align}\label{eq:shrinkfam2}
				\partial\cF(A,\overline{(X'\setminus x)\setminus A})\supseteq\partial\cF(A,\overline{X'\setminus A})\dcup(\partial\cF(A\cup x,\overline{X'\setminus (A\cup x)}))^{\cup x}\,.
			\end{align}
			Since we have~\eqref{eq:X'hypo} and since
			\begin{align*}
				\vert\partial\cF(A\cup x,\overline{X'\setminus (A\cup x)})\vert\geq\vert\cF(A\cup x,\overline{X'\setminus (A\cup x)})\vert
			\end{align*}	
			holds because~$\cF(A\cup x)$ is pseudo-intersecting, \eqref{eq:shrinkfam1} and~\eqref{eq:shrinkfam2} imply that
			\begin{align*}
				\vert\partial\cF(A,\overline{(X'\setminus x)\setminus A})\vert\geq\vert\cF(A,\overline{(X'\setminus x) \setminus A})\vert\,.
			\end{align*}
			This is all we had to show.
		\end{proof}
	
		\section{Proof of Theorems~\ref{thm:general} and~\ref{thm:crossintgeneral}}
		
		\begin{proof}[Proof of Theorem~\ref{thm:general}]
			The proof proceeds inductively by constructing sets~$M_i\subseteq M_{i+1}$ such that for all~$A\in M_{i}^{(i)}$, the family~$\cF(A)$ is pseudo-intersecting and~$\vert M_i\vert\geq n-\sum_{i\leq j\leq k}j$.
			
			We begin the backward induction with~$i=k$.
			If~$\cF\neq\emptyset$, we would be done, so let~$F\in\cF$ and set~$M_k=[n]\setminus F$ (in particular,~$\vert M_k\vert\geq n-k$).
			Since~$\cF$ is intersecting, we have~$\cF\cap M_k^{(k)}=\emptyset$.
			Thus, for~$A\in M_k^{(k)}$ and~$X\subseteq[n]$ we have~$\cF(A,\overline{X\setminus A})=\emptyset$. Hence, $\vert\partial\cF(A,\overline{X\setminus A})\vert\geq\vert\cF(A,\overline{X\setminus A})\vert$ for all~$X\subseteq [n]$, meaning that~$\cF(A)$ is pseudo-intersecting for~$A\in M_k^{(k)}$.
			
			Now assume that for some~$i$ with~$2\leq i\leq k$, a set~$M_i\subseteq[n]$ with~$\vert M_i\vert\geq n-\sum_{i\leq j\leq k}j$ has been defined such that~$\cF(A^+)$ is pseudo-intersecting for all~$A^+\in M_i^{(i)}$.
			Next, we will argue that we only need to delete at most one~$(i-1)$-set from~$M_i$ to obtain a set~$M_{i-1}$ as desired.
			
			If for all~$A\in M_i^{(i-1)}$ the family~$\cF(A,\overline{M_i\setminus A})$ is pseudo-intersecting, then we set~$M_{i-1}=M_i$.
			Since the induction hypothesis tells us that for every~$A\in M_i^{(i-1)}$ and~$x\in M_i\setminus A$, the family~$\cF(A\cup x)$ is pseudo-intersecting, Lemma~\ref{lem:shadow} implies that~$\cF(A)$ is pseudo-intersecting.
			So let us assume that~$\cF(B,\overline{M_i\setminus B})$ is not pseudo-intersecting for some~$B\in M_i^{(i-1)}$.
			Then there is some~$M\subseteq[n]$ with~$M_i\subseteq M$ such that~$\vert\partial\cF(B,\overline{M\setminus B})\vert <\vert\cF(B,\overline{M\setminus B})\vert$ and we set~$M_{i-1}=M_i\setminus B$.
			\begin{claim}\label{cl:prepseudoint}
				For every~$A\in M_{i-1}^{(i-1)}$, the family~$\cF(A,\overline{M_i\setminus A})$ is pseudo-intersecting.
			\end{claim}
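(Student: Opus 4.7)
The plan is to deduce the claim from the cross-intersecting shadow theorem (Theorem~\ref{thm:Franklcrossint}), using the deficient link~$\cF(B,\overline{M\setminus B})$ as the companion family. Fix~$A\in M_{i-1}^{(i-1)}$. Unwinding the definition, the assertion that~$\cF(A,\overline{M_i\setminus A})$ is pseudo-intersecting amounts to the inequality
\[
|\partial\cF(A,\overline{M^*\setminus A})|\geq|\cF(A,\overline{M^*\setminus A})|
\]
for every~$M^*\supseteq M_i$ (after absorbing the ``extra'' forbidden vertices into~$M^*$), so I would fix such an~$M^*$ and set $\cA:=\cF(A,\overline{M^*\setminus A})$ and $\cB:=\cF(B,\overline{M\setminus B})$.

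The key step is to verify that $\cA$ and $\cB$, viewed as families of $(k-i+1)$-sets, are cross-intersecting. Given $G_1=F_1\setminus A\in\cA$ and $G_2=F_2\setminus B\in\cB$, one expands
\[
F_1\cap F_2=(A\cap B)\cup(A\cap G_2)\cup(G_1\cap B)\cup(G_1\cap G_2).
\]
The first term is empty because $A\subseteq M_i\setminus B$; the second because $A\subseteq M_i\subseteq M$ while $G_2\subseteq[n]\setminus M$; the third because $B\subseteq M_i\subseteq M^*$ while $G_1\subseteq[n]\setminus M^*$. Hence $F_1\cap F_2=G_1\cap G_2$, which is nonempty by the intersecting property of~$\cF$. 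Theorem~\ref{thm:Franklcrossint} then forces $|\partial\cA|\geq|\cA|$ or $|\partial\cB|\geq|\cB|$; the second alternative is ruled out by the defining property of~$M$, so the first must hold, which is exactly the required inequality.

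The main (minor) conceptual point to notice is that the single witness~$M$ for the failure at~$B$ serves uniformly for every $M^*\supseteq M_i$: the cross-intersecting computation above only uses the compatibility $A,B\subseteq M_i\subseteq M\cap M^*$ and never demands $M^*=M$. Once this is recognised, the entire argument collapses to a one-line appeal to Theorem~\ref{thm:Franklcrossint}, and the trivial edge cases (for instance $\cA=\emptyset$, while $\cB\neq\emptyset$ holds automatically since $|\cB|>|\partial\cB|\geq 0$) require no separate treatment.
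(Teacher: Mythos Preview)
Your proof is correct and follows essentially the same approach as the paper: both arguments show that for any $M^*\supseteq M_i$ (the paper writes $M'$), the families $\cF(A,\overline{M^*\setminus A})$ and $\cF(B,\overline{M\setminus B})$ are cross-intersecting and then invoke Theorem~\ref{thm:Franklcrossint}. Your expansion of $F_1\cap F_2$ into four pieces is just a slightly more explicit version of the paper's observation that $G_1,G_2\subseteq[n]\setminus M_i\subseteq[n]\setminus(A\cup B)$ together with $A\cap B=\emptyset$.
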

			\begin{proof}
				Let~$A\in M_{i-1}^{(i-1)}$,~$M'\subseteq[n]$ with~$M_i\subseteq M'$ and consider~$F_1\in\cF(A,\overline{M'\setminus A})$ and~$F_2\in\cF(B,\overline{M\setminus B})$.
				Then~$F_1\cup A,F_2\cup B\in\cF$ and~$F_1,F_2\subseteq [n]\setminus M_i\subseteq [n]\setminus(A\cup B)$.
				Since~$A$ and~$B$ are disjoint and~$\cF$ is intersecting,~$F_1\cap F_2\neq\emptyset$ and so~$\cF(A,\overline{M'\setminus A})$ and~$\cF(B,\overline{M\setminus B})$ are cross-intersecting.
				Theorem~\ref{thm:Franklcrossint} implies that~$\vert\partial\cH\vert\geq\vert\cH\vert$ has to hold for some~$\cH\in\{\cF(A,\overline{M'\setminus A}),\cF(B,\overline{M\setminus B})\}$.
				By the choice of~$B$ and~$M$, this yields the statement of the claim.
			\end{proof}
			Together with the induction hypothesis, this claim allows us to apply Lemma~\ref{lem:shadow} which yields that~$\cF(A)$ is pseudo-intersecting for all~$A\in M_{i-1}^{(i-1)}$.
			Further note that in either case~$\vert M_{i-1}\vert\geq\vert M_i\vert -i+1\geq n-\sum_{i-1\leq j\leq k}j$.
			Therefore, in either case~$M_{i-1}$ is as desired.
		\end{proof}
	
		\begin{proof}[Proof of Theorem~\ref{thm:crossintgeneral}]
			Again, we aim to inductively construct sets~$M_{i}\subseteq M_{i+1}$ such that for all~$A\in M_{i}^{(i)}$, the family~$\cF(A)$ is pseudo-intersecting and~$\vert M_i\vert\geq n-(k-i+1)\ell$.
			If at any point, we should not be able to proceed, i.e., we fail to construct the set~$M_{i-1}$, then~$\cG(A)$ will be pseudo-intersecting for all~$A\subseteq M_{i}$.
			
			We begin the backwards induction with~$i=k$.
			If~$\cG=\emptyset$, we are done, so let~$G\in \cG$ and set~$M_k=[n]\setminus G$.
			Since~$\cF$ and~$\cG$ are cross-intersecting, this means that~$\cF\cap M_k^{(k)}=\emptyset$ and therefore,~$\cF(A)$ is pseudo-intersecting for all~$A\in M_k^{(k)}$.
			Further, we have~$\vert M_k\vert\geq n-\ell$.
			
			Now assume that for some~$i$ with~$2\leq i\leq k$ we have constructed sets~$M_j$ for all~$i\leq j\leq k$ as desired.
			First assume that~$\cG(B,\overline{M_i\setminus B})$ is not pseudo-intersecting for some~$B\in M_i^{(\leq \ell)}$, i.e., there is some~$X\subseteq[n]$ with~$M_i\subseteq X$ such that~$\vert\partial\cG(B,\overline{X\setminus B})\vert<\vert\cG(B,\overline{X\setminus B})\vert$.
			Then we set~$M_{i-1}=M_i\setminus B$ and readily notice that~$\vert M_{i-1}\vert\geq n-(k-i+2)\ell$.
			Further, observe that (similarly as in the proof of Claim~\ref{cl:prepseudoint}) for all~$A\in M_{i-1}^{(i-1)}$ and $M_i\subseteq M'\subseteq [n]$, the families~$\cF(A,\overline{M'\setminus A})$ and~$\cG(B,\overline{X\setminus B})$ are cross-intersecting since~$A\cap B=\emptyset$,~$A\cup B\subseteq M_i\subseteq M',X$, and since $\cF$ and $\cG$ are cross-intersecting.
			Thus, by the choice of~$B$ and~$X$, Theorem~\ref{thm:Franklcrossint} yields that~$\cF(A,\overline{M_i\setminus A})$ is pseudo-intersecting.
			Since the induction gives us that~$\cF(A^+)$ is pseudo-intersecting for all~$A^+\in M_i^{(i)}$, we are now in a position to apply Lemma~\ref{lem:shadow} to conclude that~$\cF_1(A)$ is pseudo-intersecting for all~$A\in M_{i-1}^{(i-1)}$.
			
			Next assume that~$\cG(B,\overline{M_i\setminus B})$ is pseudo-intersecting for all~$B\in M_i^{(\leq \ell)}$.
			In this case we set~$M_1=M_2=\dots=M_i$.
			\begin{claim}
				The family~$\cG(B)$ is pseudo-intersecting for all~$B\subseteq M_i=M_2$.
			\end{claim}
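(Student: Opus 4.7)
The plan is to prove the claim by backward induction on $|B|$, with the inductive step powered by Lemma~\ref{lem:shadow} applied to~$\cG$ with $M=M_i$.

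Before starting the induction, I would first record the following consequence of the second-case hypothesis: no element of~$\cG$ lies entirely inside~$M_i$. Indeed, if some $B\in\cG$ satisfied $B\subseteq M_i$, then the only $G\in\cG$ with $B\subseteq G$ would be $G=B$ itself, which automatically meets $(M_i\setminus B)\cap G=\emptyset$, so $\cG(B,\overline{M_i\setminus B})=\{\emptyset\}$, a family whose shadow is empty. This contradicts pseudo-intersection of $\cG(B,\overline{M_i\setminus B})$ at $X=\emptyset$. Hence $\cG\cap M_i^{(\ell)}=\emptyset$.

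This observation delivers the base case. For any $B\subseteq M_i$ with $|B|\geq\ell$ we must have $B\notin\cG$, whence $\cG(B)=\emptyset$ and pseudo-intersection is trivial. For the inductive step, fix $B\subseteq M_i$ with $|B|<\ell$ and assume $\cG(B')$ is pseudo-intersecting for every $B'\subseteq M_i$ with $|B'|=|B|+1$. I then apply Lemma~\ref{lem:shadow} to~$\cG$ with $A=B$ and $M=M_i$: the first hypothesis of the lemma, that $\cG(B,\overline{M_i\setminus B})$ is pseudo-intersecting, is precisely the second-case assumption applied to $B\in M_i^{(\leq\ell)}$; the second hypothesis, that $\cG(B\cup x)$ is pseudo-intersecting for each $x\in M_i\setminus B$, holds by the induction hypothesis, since $B\cup x\subseteq M_i$ has size $|B|+1$. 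The lemma then yields that $\cG(B)$ is pseudo-intersecting, completing the induction.

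The only genuine subtlety is the preliminary observation that $\cG\cap M_i^{(\ell)}=\emptyset$, which is what makes the base case honest; once that is in place, the induction runs by a single application of Lemma~\ref{lem:shadow} at each step, and no further structural property of $\cF$, $\cG$, or the previously constructed~$M_j$'s is needed.
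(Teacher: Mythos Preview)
Your proof is correct and follows essentially the same route as the paper: backward induction on $|B|$ with Lemma~\ref{lem:shadow} supplying the inductive step. The only cosmetic difference is the base case at $|B|=\ell$: the paper argues directly that $\cG(B,\overline{X\setminus B})=\cG(B,\overline{M_i\setminus B})$ for all $X$ and invokes the pseudo-intersecting hypothesis on the right-hand side, whereas you first extract the consequence $\cG\cap M_i^{(\ell)}=\emptyset$ and then note $\cG(B)=\emptyset$; these are the same observation unpacked to different depths.
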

			\begin{proof}
				For~$B\subseteq M_i$ with~$\vert B\vert>\ell$, the statement follows immediately.
				We proceed by backwards induction on~$j=\vert B\vert$ and begin with~$j=\ell$.
 				The induction start follows because for~$B$ of size~$\ell$, we have $\cG(B,\overline{X\setminus B})=\cG(B,\overline{M_i\setminus B})$ for all~$X\subseteq[n]$ and~$\cG(B,\overline{M_i\setminus B})$ is pseudo-intersecting.
				
				Given that for some~$j\leq \ell$, the family~$\cG(B^+)$ is pseudo-intersecting for all~$B^+\in M_i^{(j)}$, we can apply Lemma~\ref{lem:shadow} (since we are in the case that~$\cG(B,\overline{M_i\setminus B})$ is pseudo-intersecting for all~$B\in M_i^{(\leq \ell)}$) to conclude that~$\cG(B)$ is pseudo-intersecting for all~$B\in M_i^{(j-1)}$ which finishes the induction step.
			\end{proof}
		
			Thus, we have shown that if we cannot construct all sets~$M_1,\dots,M_k$ as desired by induction, then there is a set~$M_2\subseteq[n]$ with~$\vert M_2\vert\geq n-(k-1)\ell$ such that~$\cG(B)$ is pseudo-intersecting for all~$B\subseteq M_2$.
			In other words, we proved that indeed one of the statements in the theorem has to hold.
		\end{proof}
		
		\section*{Acknowledgments}
			The authors thank Alexandre Perozim de Faveri for fruitful discussions and Peter Frankl and Andrey Kupavskii for reading earlier versions of this paper.
			
		\begin{bibdiv}
			\begin{biblist}
				\bib{F:76}{article}{
					author={Frankl, P.},
					title={Generalizations of theorems of Katona and Milner},
					journal={Acta Math. Acad. Sci. Hungar.},
					volume={27},
					date={1976},
					number={3-4},
					pages={359--363},
					issn={0001-5954},
					review={\MR{414370}},
					doi={10.1007/BF01902114},
				}
			
				\bib{F:87}{article}{
					author={Frankl, Peter},
					title={The shifting technique in extremal set theory},
					conference={
						title={Surveys in combinatorics 1987},
						address={New Cross},
						date={1987},
					},
					book={
						series={London Math. Soc. Lecture Note Ser.},
						volume={123},
						publisher={Cambridge Univ. Press, Cambridge},
					},
					date={1987},
					pages={81--110},
					review={\MR{905277}},
				}
				
				\bib{F:21}{article}{
					author={Frankl, P.},
					title={Minimum degree and diversity in intersecting antichains},
					journal={Acta Math. Hungar.},
					volume={163},
					date={2021},
					number={2},
					pages={652--662},
					issn={0236-5294},
					review={\MR{4227804}},
					doi={10.1007/s10474-020-01100-y},
				}
			
				\bib{FK:21}{article}{
					author={Frankl, P.},
					author={Katona, G. O. H.},
					title={On strengthenings of the intersecting shadow theorem},
					journal={J. Combin. Theory Ser. A},
					volume={184},
					date={2021},
					pages={Paper No. 105510, 21},
					issn={0097-3165},
					review={\MR{4297030}},
					doi={10.1016/j.jcta.2021.105510},
				}
			
				\bib{K:64}{article}{
					author={Katona, Gy.},
					title={Intersection theorems for systems of finite sets},
					journal={Acta Math. Acad. Sci. Hungar.},
					volume={15},
					date={1964},
					pages={329--337},
					issn={0001-5954},
					review={\MR{168468}},
					doi={10.1007/BF01897141},
				}
			
				\bib{LM:21}{article}{
					author={Liu, Xizhi},
					author={Mubayi, Dhruv},
					title={Tight bounds for Katona's shadow intersection theorem},
					journal={European J. Combin.},
					volume={97},
					date={2021},
					pages={Paper No. 103391, 17},
					issn={0195-6698},
					review={\MR{4282634}},
					doi={10.1016/j.ejc.2021.103391},
				}

				\bib{EKR:61}{article}{
   					AUTHOR = {Erd\H{o}s, P.},
					author={Ko, Chao},
					author={Rado, R.},
     					TITLE = {Intersection theorems for systems of finite sets},
   					JOURNAL = {Quart. J. Math. Oxford Ser. (2)},
  					FJOURNAL = {The Quarterly Journal of Mathematics. Oxford. Second Series},
    					VOLUME = {12},
      					YEAR = {1961},
     					PAGES = {313--320},
      					ISSN = {0033-5606},
   					MRCLASS = {04.60},
  					MRNUMBER = {140419},
					MRREVIEWER = {S. Ginsburg},
       					DOI = {10.1093/qmath/12.1.313},
       					URL = {https://doi.org/10.1093/qmath/12.1.313},
				}

				\bib{Kr:63} {article}{
    					AUTHOR = {Kruskal, Joseph B.},
     					TITLE = {The number of simplices in a complex},
 					BOOKTITLE = {Mathematical optimization techniques},
     					PAGES = {251--278},
 					PUBLISHER = {Univ. California Press, Berkeley, Calif.},
      					YEAR = {1963},
  	 				MRCLASS = {05.10},
 	 				MRNUMBER = {0154827},
					MRREVIEWER = {J. Riordan},
				}

				\bib{Ka:68}{article}{
    					AUTHOR = {Katona, G.},
     					TITLE = {A theorem of finite sets},
 					BOOKTITLE = {Theory of graphs ({P}roc. {C}olloq., {T}ihany, 1966)},
     					PAGES = {187--207},
      					YEAR = {1968},
   					MRCLASS = {05A05},
  					MRNUMBER = {0290982},
					MRREVIEWER = {P. Erdos},
				}
			\end{biblist}
		\end{bibdiv}
	
\end{document}